\DeclareSymbolFont{AMSb}{U}{msb}{m}{n}
\DeclareMathSymbol{\Z}{\mathbin}{AMSb}{"5A}
\DeclareMathSymbol{\R}{\mathbin}{AMSb}{"52}
\DeclareMathSymbol{\N}{\mathbin}{AMSb}{"4E}
\DeclareMathSymbol{\Q}{\mathbin}{AMSb}{"51}
\newcommand{\Th}{\textup{Th}}
\newcommand{\mc}[1]{\mathcal{#1}}
\newcommand{\mf}[1]{\mathfrak{#1}}
\newcommand{\ob}[1]{\overline{#1}}
\newcommand{\abs}[1]{\left \vert #1 \right \vert}
\def\Ind{\setbox0=\hbox{$x$}\kern\wd0\hbox to 0pt{\hss$\mid$\hss}
\lower.9\ht0\hbox to 0pt{\hss$\smile$\hss}\kern\wd0}
\def\Notind{\setbox0=\hbox{$x$}\kern\wd0\hbox to 0pt{\mathchardef
\nn=12854\hss$\nn$\kern1.4\wd0\hss}\hbox to
0pt{\hss$\mid$\hss}\lower.9\ht0 \hbox to
0pt{\hss$\smile$\hss}\kern\wd0}
\newtheorem{thm}{Theorem}[section]
\newtheorem{lem}[thm]{Lemma}
\newtheorem{prop}[thm]{Proposition}
\newtheorem{fact}[thm]{Fact}
\newtheorem{ass}[thm]{Assumption}
\theoremstyle{definition}
\newtheorem{definition}[thm]{Definition}
\theoremstyle{remark}
\newtheorem{remark}[thm]{Remark}
\theoremstyle{remark}
\theoremstyle{remark}
\newtheorem{claim}[thm]{Claim}
\theoremstyle{remark}
\theoremstyle{remark}
\begin{document}
\bibliographystyle{plain}

\title{A Characterization of Strongly Dependent Ordered Abelian Groups}

\author{Alfred Dolich and John Goodrick}

\begin{abstract}We characterize all ordered Abelian groups whose first order theory in the language $\{+, <\}$ is strongly dependent. The main result of this note was obtained independently by Halevi and Hasson \cite{strong_group1} and Farr\'e \cite{strong_group2}.
\end{abstract}

\maketitle

\section{Introduction}

It is natural given a general model theoretic notion, such as stability or the absence of the independence property (see \cite{bible}), which may be interpreted as indicating that a structure or theory is ``tame'', to attempt to characterize within a general class of algebraic objects which of these objects satisfy the tameness condition.  In this note we carry out this program in the context of the tameness conditions given by strong dependence and finite dp-rank in the algebraic context of ordered Abelian groups.

Recall the following: 

\begin{definition}A theory $T$ in a language $\mc{L}$ {\em admits an ict-pattern of depth $\kappa$} (for $\kappa$ a cardinal)  if we may find a sequence $\{\varphi_i(x, \ob{y}_i) : i \in \kappa\}$ of $\mc{L}$-formulas, a model $\mf{M}$ of $T$ and parameters $\ob{a}_i^j$ for $(i,j) \in \kappa \times \omega$ in $M$ (where $\abs{\ob{a}_i^j}=\abs{\ob{y}_i}$ for all $j$) so that for each  $\eta: \kappa \to \omega$ the type \[\bigwedge_{i \in \kappa}\varphi_i(x, \ob{a}_i^{\eta(i)}) \wedge \bigwedge_{j \not= \eta{(i)}}\neg\varphi_i(x, \ob{a}_i^j)\] is consistent.  $T$ is {\em strongly dependent} if it does not admit an ict-pattern of depth $\aleph_0$ and $T$ has {\em dp-rank equal to n} if it admits an ict-pattern of depth $n$ but does not admit an ict-pattern of depth $n+1$.  $T$ is said to be {\em dp-minimal} if it has dp-rank equal to $1$.
\end{definition}

 We remark that it is a well known fact that if a theory $T$ is strongly dependent then it is NIP. See \cite{strong_dep} for a detailed discussion of NIP, strong dependence and dp-rank.  As noted above, we will primarily be concerned with ordered Abelian groups $(G,<)$ whose first order theory in the language of ordered groups is strongly dependent, providing a complete characterization of such groups.


Throughout $G$ is an ordered Abelian group in the language $\mathcal{L}_{oag}=\{+,<,0\}$.  Recall that a prime $p \in \N$ is called {\em singular} for $G$ if $[G:pG]=\infty$.  By Gurevich and Schmitt \cite{GurSchmitt}, $G$ is NIP.

We state our main result bounding the dp-rank of ordered Abelian groups using some notation which will be defined in the following section.

\begin{thm}
\label{dp_bound}
Suppose that $G$ is an ordered Abelian group considered as an $\mathcal{L}_{oag}$-structure. Then the dp-rank of of $G$ finite if and only if the following two conditions \textbf{both} hold:

\begin{enumerate}
\item $G$ has only finitely many singular primes; and
\item For any singular prime $p$, the auxiliary sort $\mathcal{S}_p$ (see below) is finite.
\end{enumerate}

Moreover, the condition that $G$ is strongly dependent is also equivalent to the conjunction of conditions (1) and (2) above.
Furthermore, when these conditions hold, the dp-rank of $G$ is bounded above by $$1 + \sum_{p \in \mathbb{P}_{sing}}  |\mathcal{S}_p |,$$ where $\mathbb{P}_{sing}$ is the set of all primes $p$ which are singular for $G$ (that is, $[G : pG] = \infty$).



\end{thm}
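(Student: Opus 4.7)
The plan is to invoke the Gurevich-Schmitt quantifier elimination theorem for ordered Abelian groups \cite{GurSchmitt} as the central tool. In an expansion by the auxiliary sorts $\mc{S}_p$, every $\mc{L}_{oag}$-formula becomes equivalent to a Boolean combination of order atomics and congruence atomics indexed by the sort elements $s \in \mc{S}_p$ for various primes $p$. This decomposes the definable structure into an ``order layer'' and, for each singular prime $p$, an independent family of congruence predicates indexed by $\mc{S}_p$; both directions of the theorem will be extracted from this decomposition.

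For necessity, I would first suppose there are infinitely many singular primes $p_0, p_1, \ldots$. Since $[G : p_i G] = \infty$, for each $i$ I pick parameters $\ob{a}_i^j \in G$ (for $j \in \omega$) lying in pairwise distinct cosets of $p_i G$, and let $\varphi_i(x, y)$ be the formula $p_i \mid (x - y)$. For any $\eta : \omega \to \omega$, the system demanding $\varphi_i(x, \ob{a}_i^{\eta(i)})$ and $\neg\varphi_i(x, \ob{a}_i^j)$ for $j \ne \eta(i)$ is finitely satisfiable: any finite subsystem consists of congruence conditions modulo pairwise coprime integers, which has a simultaneous solution. This yields an ict-pattern of depth $\aleph_0$. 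If instead some $\mc{S}_p$ is infinite for a singular prime $p$, an analogous ict-pattern can be built using congruence formulas indexed by distinct elements of $\mc{S}_p$: each $s \in \mc{S}_p$ supplies an independent congruence predicate (cosets witnessing $[G : pG] = \infty$ restricted to the convex piece determined by $s$), and these are mutually orthogonal. In either case $G$ is not strongly dependent, which also rules out finite dp-rank.

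For sufficiency and the bound, assume (1) and (2) and set $N := 1 + \sum_{p \in \mathbb{P}_{sing}} |\mc{S}_p|$, which is finite. The claim is that $G$ admits no ict-pattern of depth $N+1$. Given a candidate pattern $\{\varphi_i(x, \ob{y}_i) : i \le N\}$, I apply Gurevich-Schmitt QE to each $\varphi_i$ to obtain a Boolean combination of order inequalities and congruence atomics, each of the latter tagged by some pair $(p,s)$ with $p \in \mathbb{P}_{sing}$ and $s \in \mc{S}_p$. The summand $1$ in $N$ corresponds to the order layer, which has dp-rank $1$ (as in any o-minimal expansion of the underlying divisible hull of $G$), while each of the $|\mc{S}_p|$ summands corresponds to a single congruence layer, which is itself dp-minimal (essentially a one-congruence Presburger-like reduct at a fixed spine element). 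Subadditivity of dp-rank over the explicit partition of atomic types into these finitely many mutually orthogonal layers then delivers the bound. The principal obstacle is this orthogonality/subadditivity step: one must verify that an ict-pattern in the full language projects layer by layer without loss of information, which requires a careful analysis of how the quantifier-eliminated atomics and their parameters interact across the auxiliary sorts, and in particular that independence in two distinct layers can be realized simultaneously.
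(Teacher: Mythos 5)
Your high-level architecture (quantifier elimination into an order layer plus congruence layers, then bounding how many rows each layer can contribute) matches the paper's strategy in spirit, but both places where you defer to ``orthogonality'' are exactly where the real work lies, and in one of them the orthogonality claim is false as stated. For the necessity direction when some $\mathcal{S}_p$ is infinite: the congruence predicates $x \equiv_{p,\alpha} y$ (i.e.\ $x - y \in G_\alpha + pG$) for distinct $\alpha \in \mathcal{S}_p$ are \emph{not} mutually orthogonal --- the subgroups $G_\alpha + pG$ form a chain under inclusion, so a congruence relative to a smaller $G_\alpha$ refines, rather than varies independently of, a congruence relative to a larger one. The paper's Theorem~\ref{sp_inf} has to work around this: it chooses an infinite chain $H_p(e_0) \subset H_p(e_1) \subset \cdots$, interpolates witnesses $f_{i,j}$ strictly between consecutive levels, and uses the formulas $x \equiv_{p^{i+1},\alpha_i} c_{i,j}$ with $c_{i,j} = p^i f_{i,j}$; the increasing powers of $p$ are what make row $i$ invisible to row $i'$ (for $i' < i$ the parameters of row $i'$ land inside $G_{\alpha_i}$, and for $i' > i$ they land inside $p^{i+1}G$), while the $2$-inconsistency of each row is a separate computation using convexity to cancel factors of $p^i$. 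Nothing in your sketch produces this construction, and the naive pattern it suggests is not an ict-pattern.

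For the sufficiency direction, the step you yourself flag as ``the principal obstacle'' is essentially the entire theorem: there is no off-the-shelf subadditivity of dp-rank over a partition of the \emph{atomic formulas} (subadditivity is a statement about tuples of variables, and here $x$ is a single variable). Concretely one must show (i) after passing to mutually indiscernible rows and minimal conjunctions, each row of an inp-pattern is either a single congruence $x \equiv_{p^\ell,\alpha} t(\overline a_{ij})$ or a conjunction of convex conditions --- the paper's Proposition~\ref{main_dichotomy}, which requires a genuine argument to discard negated congruences and inequations; (ii) at most one row can be convex (Lemma~\ref{conv_row}); and (iii) two rows built on the same prime $p$ with $G_\alpha \subseteq G_{\alpha'}$ force a new element $\beta \in \mathcal{S}_p$ with $\alpha \leq \beta < \alpha'$ (Lemma~\ref{Sp}) --- it is this gap-counting, not any orthogonality of layers, that caps the number of rows per prime at $|\mathcal{S}_p|$. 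Your argument for infinitely many singular primes is fine (it is essentially the known one from \cite{CKS}), and the shape of the bound is right, but as written the proof has gaps at both of its load-bearing points.
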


Theorem \ref{dp_bound} was established independently by Halevi and Hasson in their preprint ``Strongly dependent ordered abelian groups and Henselian fields'' \cite{strong_group1} as well as by Rafel Farr\'{e} in the preprint ``Strong ordered Abelian groups and dp-rank'' \cite{strong_group2}.  We also note that this characterization of strongly dependent ordered Abelian groups has already been used by Halevi and Hasson to prove that for any strongly dependent pure field $K$ and any henselian valuation $v$ on $K$, the two-sorted structure $(K, vK)$ is strongly dependent \cite{strong_group1}.

Our intention in releasing this notes is not to pre-empt either of these works but rather to provide an alternate and potentially more na\"{i}ve proof of the basic theorem.  We encourage the reader to consult either \cite{strong_group2} or \cite{strong_group1} for more definitive accounts of these results.


\section{The languages $\mathcal{L}_{eq}$ and $\mathcal{L}_2$ for quantifier elimination}

Our proof of Theorem \ref{dp_bound} is highly dependent on the work of Cluckers and Halupczok in \cite{CluckersHalup} on quantifier elimination in ordered Abelian groups thus in this section
 we will review some notation and fundamental results from  \cite{CluckersHalup} on a useful language $\mathcal{L}_{eq}$ for eliminating quantifiers in ordered abelian groups. We will not give the full details of how to define this language, since in fact the simpler language $\mathcal{L}_2$ (essentially like the langage $\mathcal{L}_{short}$ given by Jahnke, Simon, and Walsberg \cite{JSW}) will suffice for eliminating quantifiers when all the $\mathcal{S}_p$ are finite, as is always the case when the group has finite dp-rank (see Theorem \ref{sp_inf}).

\begin{definition}
For a positive integer $n$ and $a \in G \setminus n G$, $H_n(a)$ is the largest convex subgroup of $G$ such that $$a \notin H_n(a) + nG,$$ which turns out to always be a definable subgroup of $G$. If $a \in nG$, we set $H_n(a) = \{0\}$.

For $a, a' \in G$, say $a \sim_n a'$ if $H_n(a) = H_n(a')$, and let $\mathcal{S}_n$ be the imaginary sort $G / \sim_n$. The sorts $\mathcal{S}_n$ are linearly ordered by inclusion. Let $\mathfrak{s}_n : G \rightarrow \mathcal{S}_n$ be the canonical surjection.

If $\alpha \in \mathcal{S}_n$ and $\alpha = \mathfrak{s}_n(a)$, then we write ``$G_\alpha$'' as an abbreviation for $H_n(a)$.
\end{definition}

\begin{remark}
As pointed out in \cite{CluckersHalup}, for any fixed $n$, the class of subgroups $\{G_\alpha \, | \, \alpha \in \mathcal{S}_n\}$ is equal to $\{G_\alpha \, | \, \alpha \in \mathcal{S}_p, \textup{ prime, and } p | n \}$. Therefore from now on we will only consider sorts of the form $\mathcal{S}_p$ for $p \in \mathbb{P}$ where $\mathbb{P}$ is the set of all prime numbers. 
\end{remark}

We write $H \leq_{con} G$ if $H$ is a convex subgroup of $G$.

\begin{definition}
If $\alpha \in \mathcal{S}_p$ and $m $ is a positive integer, $$G^{\left[m\right]}_\alpha = \bigcap \left\{H + m G \, : \, H \leq_{con} G,\\ H \supsetneq G_\alpha \right\}.$$
If $m, m'$ are positive integers and $x, y \in G$, write $$x \equiv^{\left[m\right]}_{m, \alpha} y$$ for the relation $$x - y \in G^{\left[m'\right]}_\alpha + mG.$$
\end{definition}

The groups $G^{\left[m\right]}_\alpha$ and the relations $x \equiv^{\left[m\right]}_{m, \alpha} y$ are always definable, by the following fact (Lemma~2.4 from \cite{CluckersHalup}):

\begin{fact}
\label{G_bracket}
$$G^{\left[n\right]}_\alpha = \bigcap \left\{ G_{\alpha'} + n G \, : \, \alpha' \in \mathcal{S}_n, \, \alpha' > \alpha \right\}$$
\end{fact}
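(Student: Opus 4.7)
The plan is to prove the asserted equality by showing both inclusions separately, working directly from the definition of $G^{[n]}_\alpha$ and from the defining maximality of $H_n(\cdot)$.

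The inclusion $G^{[n]}_\alpha \subseteq \bigcap\{G_{\alpha'} + nG : \alpha' \in \mathcal{S}_n, \alpha' > \alpha\}$ is immediate: each $G_{\alpha'}$ on the right-hand side is by definition a convex subgroup of $G$, and the condition ``$\alpha' > \alpha$'' unwinds (since the sorts are ordered by inclusion) to ``$G_{\alpha'} \supsetneq G_\alpha$''. Thus every $G_{\alpha'}$ appearing on the right is one of the convex subgroups $H$ used in the defining intersection for $G^{[n]}_\alpha$, so $G^{[n]}_\alpha \subseteq G_{\alpha'} + nG$ for each such $\alpha'$.

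For the reverse inclusion, I would argue by contrapositive. Suppose $x \notin G^{[n]}_\alpha$. Unwrapping the definition, there is some convex subgroup $H \leq_{con} G$ with $H \supsetneq G_\alpha$ and $x \notin H + nG$. Since $0 \in H$, this forces $x \notin nG$, so the element $\alpha' := \mathfrak{s}_n(x)$ of $\mathcal{S}_n$ is well-defined and $G_{\alpha'} = H_n(x)$. By the maximality clause in the definition of $H_n(x)$---it is the \emph{largest} convex subgroup $H'$ of $G$ such that $x \notin H' + nG$---the subgroup $H$ itself is contained in $G_{\alpha'}$. Hence $G_\alpha \subsetneq H \subseteq G_{\alpha'}$, so $\alpha' > \alpha$ in the ordering on $\mathcal{S}_n$. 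Finally, by definition $x \notin H_n(x) + nG = G_{\alpha'} + nG$, which witnesses that $x$ is absent from the right-hand intersection.

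The only real subtlety is choosing the correct $\alpha'$ in the contrapositive. It is tempting to try to find some $\alpha' > \alpha$ with $G_{\alpha'} \subseteq H$ (which would directly yield $G_{\alpha'} + nG \subseteq H + nG$), but such an $\alpha'$ need not exist for an arbitrary convex $H$. The elegant move is instead to take $\alpha' = \mathfrak{s}_n(x)$: the maximality built into $H_n(x)$ delivers the comparison $H \subseteq G_{\alpha'}$ for free, and this single fact gives both $\alpha' > \alpha$ and the desired failure $x \notin G_{\alpha'} + nG$. Everything else is just unwinding definitions.
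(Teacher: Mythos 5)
Your proof is correct. Note that the paper does not actually prove this statement: it is quoted as Lemma~2.4 of Cluckers--Halupczok \cite{CluckersHalup}, so there is no in-paper argument to compare against. Your two inclusions are exactly what is needed: the forward one is, as you say, immediate because each $G_{\alpha'}$ with $\alpha' > \alpha$ is a convex subgroup properly containing $G_\alpha$ and hence one of the $H$'s in the defining intersection; and for the reverse direction, the choice $\alpha' = \mathfrak{s}_n(x)$ together with the maximality of $H_n(x)$ among convex subgroups $H'$ with $x \notin H' + nG$ (which is built into the definition, the existence of such a largest subgroup being part of what \cite{CluckersHalup} establishes) gives both $\alpha' > \alpha$ and $x \notin G_{\alpha'} + nG$. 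Your closing remark correctly identifies the one trap: one cannot in general find $\alpha' > \alpha$ with $G_{\alpha'} \subseteq H$, so going ``upward'' to $H_n(x)$ rather than ``downward'' inside $H$ is the essential move.
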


The following fact is a direct corollary of Cluckers and Halupczok's quantifier elimination, and is proved by a similar argument as in \cite{JSW}.

\begin{fact}
\label{qe_short}
Suppose that for every prime $p$, the sort $\mathcal{S}_p$ is finite. Then the complete theory of $(G; \leq, +)$ eliminates quantifiers in the extension $\mathcal{L}_2$ of $\mathcal{L}_{oag}$ which contains the following additional symbols:
\begin{enumerate}
\item Symbols for $0$ (constant) and $-$ (unary function);
\item Binary predicates $\equiv_m$ for the relation $x - y \in mG$, for each positive $m \in \N$;
\item Binary predicates $\equiv_{m, \alpha}$ for each positive $m \in \N$ and $\alpha \in \mathcal{S}_p$, denoting the relation $$x \equiv_{m, \alpha} y \Leftrightarrow x - y \in G_\alpha + mG;$$
\item Unary predicates for the (countably many) convex subgroups $G_\alpha$, where $\alpha \in \mathcal{S}_p$ for some prime $p$;
\item Constants naming a countable elementary submodel $G_0$ of $G$.
\end{enumerate}
\end{fact}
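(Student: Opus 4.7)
The plan is to deduce the stated quantifier elimination in $\mathcal{L}_2$ from the Cluckers-Halupczok QE in their richer language $\mathcal{L}_{eq}$. Given any $\mathcal{L}_{oag}$-formula $\varphi(\bar{x})$, I would first apply Cluckers-Halupczok to obtain an equivalent quantifier-free $\mathcal{L}_{eq}$-formula $\psi(\bar{x})$, and then show, using the standing hypothesis that every $\mathcal{S}_p$ is finite, that each atomic subformula of $\psi$ is itself equivalent to a quantifier-free $\mathcal{L}_2$-formula.

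The central collapse is the treatment of the congruence relations $\equiv^{[m']}_{m, \alpha}$. By Fact \ref{G_bracket},
$$G_\alpha^{[m']} \;=\; \bigcap \bigl\{ G_{\alpha'} + m' G : \alpha' \in \mathcal{S}_{m'},\; \alpha' > \alpha \bigr\},$$
and by the remark following the definition of $\mathcal{S}_n$, the sort $\mathcal{S}_{m'}$ is a union of the (assumed finite) sorts $\mathcal{S}_p$ for primes $p \mid m'$, hence finite. Because the family $\{G_{\alpha'} : \alpha' \in \mathcal{S}_{m'}\}$ is linearly ordered by inclusion, the translates $G_{\alpha'} + m'G$ are also linearly ordered, so a finite intersection over $\alpha' > \alpha$ collapses to a single term $G_{\alpha^+} + m' G$ for an appropriate successor element $\alpha^+$. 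Hence $x \equiv^{[m']}_{m, \alpha} y$ is equivalent to membership of $x - y$ in $G_{\alpha^+} + m'G + mG = G_{\alpha^+} + \gcd(m,m')\,G$, which is exactly $x \equiv_{\gcd(m,m'),\,\alpha^+} y$, a primitive relation of $\mathcal{L}_2$.

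The remaining atomic formulas of $\mathcal{L}_{eq}$ either refer to specific elements of the sorts $\mathcal{S}_p$ or quantify over them. Since each $\mathcal{S}_p$ is finite, any such quantification becomes a finite Boolean combination, and the constants for the elementary submodel $G_0$ supplied by item (5) furnish, for each $\alpha \in \mathcal{S}_p$, a canonical representative $c_\alpha \in G_0$ with $\mathfrak{s}_p(c_\alpha) = \alpha$. This lets one refer uniformly to each $\alpha$ by a quantifier-free $\mathcal{L}_2$-formula, exactly as in the Jahnke-Simon-Walsberg construction of $\mathcal{L}_{short}$. A standard back-and-forth between $\aleph_1$-saturated models in $\mathcal{L}_2$ then yields the claimed QE.

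The step I expect to require the most care is verifying that the intersection defining $G_\alpha^{[m']}$ really does collapse to a single subgroup of the form $G_{\alpha^+} + m'G$: the identity $(A \cap B) + C = (A+C) \cap (B+C)$ fails in general, so one must carefully exploit both the finiteness of $\mathcal{S}_{m'}$ and the total ordering of the convex subgroups $G_{\alpha'}$ in order to push the $+m'G$ inside and read off a single term. Once this collapse is established, the remaining bookkeeping of matching atomic $\mathcal{L}_{eq}$-formulas to $\mathcal{L}_2$-counterparts is a direct adaptation of the argument in \cite{JSW}.
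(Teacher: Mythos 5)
Your proposal is correct and takes essentially the same route as the paper: both deduce the result from the Cluckers--Halupczok quantifier elimination by showing that, when every $\mathcal{S}_p$ is finite, the extra symbols of $\mathcal{L}_{qe}$ collapse to $\mathcal{L}_2$-primitives --- in particular using Fact~\ref{G_bracket} to reduce $\equiv^{[m']}_{m,\alpha}$ to an instance of $\equiv_{m,\alpha}$, and using the finiteness of the sorts together with the constants for $G_0$ to absorb the auxiliary-sort relations. Your explicit computation that the intersection collapses to $G_{\alpha^+}+m'G$ and hence to $\equiv_{\gcd(m,m'),\alpha^+}$ is a slightly more detailed version of the step the paper leaves to the citation of Fact~\ref{G_bracket}.
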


\begin{proof}
This follows by the same argument in the proof of Proposition~5.1 of \cite{JSW} showing that the quantifier elimination language $\mathcal{L}_{qe}$ from \cite{CluckersHalup} can be simplified (in the case of a group with no singular primes) to a language they call $\mathcal{L}_{short}$, but we review some details of the argument here. The language $\mathcal{L}_2$ contains all the symbols of $\mathcal{L}_{short}$ except for the unary predicates $U_{n, \overline{a}}(x)$ for the cosets of $G / nG$, but these were used in \cite{JSW} only to define the congruence relations $\equiv_m$, which we have explicitly included in our language $\mathcal{L}_2$.

The quantifier elimination language $\mathcal{L}_{qe}$ also contains symbols for the relations $\equiv^{[n]}_{m, \alpha}$, but by Fact~\ref{G_bracket}, when every $\mathcal{S}_p$ is finite these are equivalent to instances of $\equiv_{m, \alpha}$. The only remaining symbols in $\mathcal{L}_{qe}$ are relations on the auxiliary sorts $\mathcal{S}_p$, but since these sorts are all finite and $\mathcal{L}_2$ includes constants for a countable model in our language, these relations can clearly be defined in $\mathcal{L}_2$ without quantifiers.
\end{proof}

\section{Infinite $\mathcal{S}_p$ implies not strongly dependent}

In \cite{JSW} Jahnke, Simon, and Walsberg show that if $G$ has no non-singular primes then $G$ is dp-minimal.  In \cite{CKS} Chernikov, Kaplan, and Simon show that if $G$ has infinitely many singular primes then $G$ is not strongly dependent.  Thus it is obvious to conjecture that if $G$ has only finitely many singular primes then $G$ is strongly dependent (and in fact of finite dp-rank).  In this section we show that this is false and that in fact a group can have only one singular prime and nonetheless still not be strongly dependent.

  For convenience notice the following facts, which is inherent in \cite{CluckersHalup}.

\begin{fact} If $a, b \in G$ are equivalent modulo $pG$ then $H_p(a)=H_p(b)$.  In particular if $p$ is non-singular then $\mathcal{S}_p$ is finite.
\end{fact}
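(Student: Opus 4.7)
The statement has two parts, and both are essentially unwindings of the definitions, so no serious obstacle is expected. The plan is simply to check that the defining condition for $H_p(a)$ depends only on the coset $a + pG$, and then to observe that this immediately bounds $|\mathcal{S}_p|$ by $[G : pG]$.

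For the first assertion, I would begin by recalling the definition: for $a \notin pG$, $H_p(a)$ is the largest convex subgroup $H \leq_{con} G$ with $a \notin H + pG$; otherwise $H_p(a) = \{0\}$. Assume $a \equiv b \pmod{pG}$, i.e.\ $a - b \in pG$. First handle the trivial case: if $a \in pG$ then $b = a - (a-b) \in pG$ as well, so both $H_p$'s equal $\{0\}$. Otherwise neither $a$ nor $b$ lies in $pG$, and for any convex subgroup $H \leq_{con} G$ we have $pG \subseteq H + pG$, hence $a \in H + pG$ if and only if $b \in H + pG$. Therefore the collections of convex subgroups appearing in the definitions of $H_p(a)$ and $H_p(b)$ coincide, and so do their maxima.

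For the second assertion, the first part shows that the canonical surjection $\mathfrak{s}_p : G \to \mathcal{S}_p$ factors through the quotient $G/pG$: the value $\mathfrak{s}_p(a) = [H_p(a)]$ depends only on $a + pG$. Consequently $|\mathcal{S}_p| \leq [G : pG]$, and if $p$ is non-singular this index is finite by definition, giving the desired conclusion. The argument is purely definitional, and the only subtle point worth double-checking is the containment $pG \subseteq H + pG$ used in the coset argument, which is immediate.
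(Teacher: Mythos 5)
Your proof is correct and is essentially the same argument as the paper's: the key point in both is that for $a - b \in pG$ and any convex subgroup $H$, membership of $a$ and $b$ in $H + pG$ coincide, so $H_p$ depends only on the coset modulo $pG$, giving $|\mathcal{S}_p| \leq [G : pG]$. Your version just spells out the trivial case $a \in pG$ and the factoring of $\mathfrak{s}_p$ through $G/pG$ a bit more explicitly.
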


\begin{proof}  
This follows immediately from the definitions, since for $a$ and $b$ which are equivalent modulo $pG$ and any convex subgroup $H$ of $G$, we have that $a \in H + pG$ if and only if $b \in H + pG$.
\end{proof}

\begin{fact}\label{coord} If $H_p(a) \subset H_p(b)$ then we can find $a' \in H_p(b)$ so that $H_p(a)=H_p(a')$.
\end{fact}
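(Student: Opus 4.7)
The plan is to exploit the maximality property built into the definition of $H_p(a)$, together with the previous fact that congruent elements modulo $pG$ have equal $H_p$-values.

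First I would dispose of the degenerate case $a \in pG$, in which $H_p(a) = \{0\}$ by definition; then $a' = 0 \in H_p(b)$ trivially satisfies $H_p(a') = \{0\} = H_p(a)$, so the statement holds. Thus we may assume $a \notin pG$, so that $H_p(a)$ is genuinely the largest convex subgroup $H$ of $G$ with $a \notin H + pG$.

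Next, since $H_p(b) \supsetneq H_p(a)$ is a strictly larger convex subgroup, the maximality clause in the definition of $H_p(a)$ forces
\[ a \in H_p(b) + pG. \]
Write $a = a' + pg$ with $a' \in H_p(b)$ and $g \in G$. Then $a - a' \in pG$, i.e.\ $a$ and $a'$ are equivalent modulo $pG$, so by the preceding fact we obtain $H_p(a') = H_p(a)$. This $a'$ is the desired witness.

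There is no real obstacle here; the entire proof rides on unpacking the word ``largest'' in the definition of $H_p(a)$ and then feeding the resulting decomposition into the preceding fact. If anything, the only point worth stating explicitly is that every convex subgroup properly containing $H_p(a)$ must meet $a$ modulo $pG$, which is exactly the maximality assertion applied to $H_p(b)$.
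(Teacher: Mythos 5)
Your proof is correct and is essentially the paper's own argument: both extract $a \in H_p(b) + pG$ from the maximality clause in the definition of $H_p(a)$, write $a = a' + pg$ with $a' \in H_p(b)$, and invoke the preceding fact that elements congruent modulo $pG$ have the same $H_p$-value. Your explicit handling of the degenerate case $a \in pG$ and your unpacking of the word ``largest'' are harmless elaborations of what the paper compresses into ``by definition.''
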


\begin{proof}  Note that by definition $a \in H_p(b)+pG$.  Let $a=a'+pg$ where $a' \in H_p(b)$.  Then $a$ and $a'$ are equivalent modulo $pG$ and thus by the previous fact $H_p(a)=H_p(a')$.
\end{proof}

\begin{thm}\label{sp_inf}  Suppose that for some prime $\mc{S}_p$ is infinite then $G$ is not strongly dependent.
\end{thm}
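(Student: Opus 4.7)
The plan is to construct an ict-pattern of depth $\aleph_0$ in $\Th(G)$, showing that $G$ is not strongly dependent. First, using that $\mathcal{S}_p$ is infinite and that the convex subgroups of $G$ are linearly ordered by inclusion, I extract a strictly increasing chain $H_0 \subsetneq H_1 \subsetneq H_2 \subsetneq \cdots$ of convex subgroups of $G$, each of the form $H_i = H_p(a_i)$ for some $a_i \in G$. By iterating Fact~\ref{coord}, I may assume $a_i \in H_{i+1}$ for every $i$, so that $a_i \in H_{i+1} \setminus (H_i + pG)$. Working in $V := G/pG$ and setting $V_i := (H_i + pG)/pG$, a short linear-independence check on the images $\ob{a}_i \in V_{i+1} \setminus V_i$ yields that the chain $V_0 \subsetneq V_1 \subsetneq \cdots$ is strict and that $[V : V_i] \geq \aleph_0$ for every $i$.

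Next I use the family of formulas $\varphi_i(x, y) := (x - y \in H_i + pG)$, each an $\mathcal{L}_2$-definable equivalence relation with infinitely many classes. For each $i$ I choose parameters $\ob{a}_i^{\,j}$ (for $j \in \omega$) lying in $\omega$-many distinct cosets of $H_i + pG$, which makes the negative conditions $\neg\varphi_i(x, \ob{a}_i^{\,j})$ for $j \neq \eta(i)$ hold automatically by disjointness of cosets. It then suffices to arrange that for every $\eta : \omega \to \omega$ the positive conditions $\bigwedge_i \varphi_i(x, \ob{a}_i^{\,\eta(i)})$ are jointly realizable in a sufficiently saturated elementary extension.

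The principal obstacle is that the relations $\equiv_{H_i+pG}$ refine one another along the chain, so prescribing the coset of $x$ at level $i$ already restricts its coset at every higher level. To work around this, I plan to choose the $\ob{a}_i^{\,j}$ inductively: arrange that for each $i$ all of the row-$i$ parameters share a common coset modulo every $H_k + pG$ with $k > i$, while still being in $\omega$-many distinct cosets of $H_i + pG$. That this is possible uses that each quotient $V_{k+1}/V_k$ is nontrivial and that one can (after possibly passing to a subchain of the $H_i$'s to accumulate enough room in the successive quotients) find $\omega$ distinct cosets of $H_i + pG$ inside a single coset of $H_{i+1} + pG$. Once the parameters are in hand, consistency of each type prescribed by $\eta$ reduces, via Fact~\ref{qe_short}, to a compatible chain of coset choices, which is realized in any sufficiently saturated model. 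The technical heart of the proof, and what I expect to be the main obstacle, is precisely this inductive construction of compatible parameters that simultaneously meet the within-row distinctness and the across-row compatibility requirements.
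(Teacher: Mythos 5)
Your construction has a fatal structural flaw: with the formulas $\varphi_i(x,y) := x - y \in H_i + pG$, all rows use the \emph{same} modulus $p$, so the congruence subgroups $H_0 + pG \subseteq H_1 + pG \subseteq \cdots$ are nested, and no choice of parameters can make this into an ict-pattern. Indeed, if $i < k$ and $x$ satisfies both $x - \ob{a}_i^{\eta(i)} \in H_i + pG$ and $x - \ob{a}_k^{\eta(k)} \in H_k + pG$, then since $H_i + pG \subseteq H_k + pG$ we get $\ob{a}_i^{\eta(i)} - \ob{a}_k^{\eta(k)} \in H_k + pG$. Your across-row compatibility fixes the coset of $\ob{a}_i^{\eta(i)}$ modulo $H_k + pG$ independently of $\eta(i)$, but row-$k$ inconsistency forces the $\ob{a}_k^j$ to lie in pairwise \emph{distinct} cosets of $H_k + pG$; hence at most one value of $\eta(k)$ is compatible with row $i$, and almost every path is inconsistent. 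The workaround you propose addresses compatibility among the row-$i$ parameters with each other, but not the requirement that the chosen row-$k$ parameter land in that same higher-level coset, and the latter cannot be arranged. (A secondary issue: passing to a subchain of an $\omega$-indexed chain cannot make the successive quotients $V_{i+1}/V_i$ infinite; what is actually needed, and what saturation provides, is an interpolation of infinitely many convex subgroups of the form $H_p(f)$ between consecutive levels.)

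The missing idea is to make the rows use \emph{different} moduli so that the congruence subgroups are no longer nested: row $i$ should be $x \equiv_{p^{i+1}, \alpha_i} c_{i,j}$, i.e.\ $x - c_{i,j} \in H_i + p^{i+1}G$, with parameters of the special form $c_{i,j} = p^i f_{i,j}$ where the $f_{i,j}$ name infinitely many convex subgroups interpolated (by saturation) between $H_i$ and $H_{i+1}$. Then a parameter $c_{i',\eta(i')}$ from another row is absorbed by row $i$'s subgroup — it lies in $p^{i+1}G$ when $i' > i$ and in $H_i$ when $i' < i$ — so the explicit element $\sum_i c_{i,\eta(i)}$ realizes every path, while within a row the $c_{i,j}$ remain pairwise incongruent modulo $H_i + p^{i+1}G$. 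Without some device of this kind (increasing powers of $p$ together with $p$-divisible representatives), a chain of nested equivalence relations can never yield a pattern of depth greater than one.
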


\begin{proof}  Without loss of generality we may assume that $G$ is a sufficiently saturated model of $\Th(G)$.  Thus we may find $e_i \in G$ for $i \in \omega$ so that $H_p(e_i) \subset H_p(e_{i+1})$.  By Fact \ref{coord} we may assume that $e_i \in H_p(e_{i+1})$.  Fix $i \in \omega$,  we choose $f_{i,j} \in G$ for $j \in \omega$ so that 
$H_p(e_i) \subset H_p(f_{i,j}) \subset H_p(f_{i,j+1}) \subset H_p(e_{i+1})$ for every $j \in \omega$.  Again, we may assume that $f_{i,j} \in H_p(f_{i,j+1})$.  

Let $c_{i,j}=p^if_{i,j}$ and let $\alpha_i$ be the element of the sort $\mathcal{S}_p$ such that $G_{\alpha_i} = H_p(e_i)$.

\begin{claim}
\label{row_inconsistency}
If $j_0 \neq j_1$, then $$c_{i, j_0} \not\equiv_{p^{i+1}, \alpha_i} c_{i, j_1}.$$
\end{claim}

\begin{proof}
Without loss of generality, $j_0 < j_1$. Suppose, towards a contradiction, that these two elements are $\equiv_{p^{i+1}, \alpha_i}$-equivalent. Then there are $g \in H_p(e_i)$ and $h \in G$ such that $$c_{i, j_0} - c_{i, j_1} = p^i f_{i, j_0} - p^i f_{i, j_1} = g + p^{i+1} h.$$ Thus the element $g$ is divisible by $p^i$, and by convexity of $H_p(e_i)$ there is $g' \in H_p(e_i)$ such that $g = p^i g'$.

Now comparing with the previous displayed equation above, we can cancel out the factors of $p^i$ to conclude that $$f_{i, j_0} - f_{i, j_1} = g' + p h,$$ so $$f_{i, j_1} = -g' -ph +f_{i, j_0} \in pG + H_p(f_{i, j_1})$$ (since $j_0 < j_1$ implies that $f_{i,j_0} \in H_p(f_{i, j_1})$), but this contradicts the definition of $H_p(f_{i,j_1})$.
\end{proof}

\begin{claim}
\label{path_consistency}
For any $n \in \omega$ and any $\eta: [n] \rightarrow [n]$ (where $[n] = \{1, 2, \ldots, n\}$), the formula $$\bigwedge_{i=1}^n x \equiv_{p^{i+1}, \alpha_i} c_{i, \eta(i)}$$ is consistent, and is satisfied by the element $$a := \sum_{i=1}^n c_{i, \eta(i)}.$$
\end{claim}

\begin{proof}
This reduces to checking that if $i \in [n]$ and $j \in [n] \setminus \{i\}$, then $c_{j, \eta(j)} \in p^{i+1} G + H_p(e_i).$ But on the one hand, if $j < i$, then $$c_{j, \eta(j)} = p^j f_{j, \eta(j)} \in H_p(f_{j, \eta(j) + 1}) \subseteq H_p(e_i);$$ and on the other hand, if $j > i$, then $c_{j, \eta(j)} = p^j f_{j, \eta(j)} \in p^{i+1} G$.
\end{proof}

Now from Claims~\ref{row_inconsistency} and \ref{path_consistency} above, we conclude that there is an inp-pattern of depth $\omega$ in $G$ whose $i$-th row consists of the formulas of the form $$x \equiv_{p^{i+1}, \alpha_i} c_{i,j}$$ as $j$ varies over $\omega$.

\end{proof}

Notice that Theorem \ref{sp_inf} together with the result from \cite{CKS} mentioned above established the the ``only if'' portion of Theorem \ref{dp_bound}

\section{Bounding the dp-rank of $G$ from above}

In this section we will prove that if conditions (1) and (2) of Theorem~\ref{dp_bound} hold, then the dp-rank of $G$ is finite. At the same time, we will also establish the upper bound on the dp-rank given in Theorem~\ref{dp_bound}.

We find it convenient to work with inp-patterns rather than ict-patterns (see below for the definition). Recall that in an NIP theory, the maximal size of an inp-pattern in a single variable $x$ (the \emph{burden} of $T$) is equal to the dp-rank of $T$ \cite{onsh_usv}.

We recall some basic facts and set some notation which we will use throughout this section. As the burden of $T=Th(G)$ is equal to the dp-rank of $T$, we have that the dp-rank of $T = Th(G)$ is at least $n$ if and only if  there is an array of formulas (an \emph{inp-pattern of depth $n$}) $$\{ \varphi_i(x; \overline{a}_{ij}) : 1 \leq i \leq n, j \in \omega\}$$ such that:

\begin{enumerate}
\item For every $i \in \{1, \ldots, n\}$, there is a $k_i \in \omega$ such that $$\{\varphi_i(x; \overline{a}_{ij}) : j \in \omega\}$$ is $k_i$-inconsistent; and
\item For every $\eta : \{1, \ldots, n\} \rightarrow \omega$, the set $$\{\varphi_i(x; \overline{a}_{i, \eta(i)}) : 1 \leq i \leq n\}$$ is consistent.
\end{enumerate}

By arguments which are now standard, we can also safely assume that

\begin{enumerate}
\setcounter{enumi}{2}
\item The subindices $j$ range over all of $\Q$; and
\item The array of parameters $\overline{a}_{ij}$ is ``mutually indiscernible;'' that is, for each $i \in \{1, \ldots, n\}$, the sequence $\{ \overline{a}_{i,j} : j \in \Q\}$ is indiscernible over the set consisting of the union of all the tuples $\overline{a}_{i',j}$ such that $i' \neq i$.
\end{enumerate}

From now on, we work in an ordered Abelian group $G$ such that the sorts $\mathcal{S}_p$ (for $p$ a singular prime) are all finite, and we fix such an inp-pattern of depth $n$ in the language $\mathcal{L}_2$ described above. By quantifier elimination, we may assume that each formula $\varphi_i$ is quantifier-free.

The following is easy and already known, but we include it for convenient reference:

\begin{lem}
We may further assume that each formula $\varphi_i$ is a conjunction of literals (atomic formulas or negations of atomic formulas).
\end{lem}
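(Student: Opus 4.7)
The plan is to put each quantifier-free formula $\varphi_i$ into disjunctive normal form and then select, for each row, a single disjunct that still witnesses the pattern. Write
$\varphi_i(x; \overline{y}_i) = \bigvee_{k=1}^{m_i} \psi_{i,k}(x; \overline{y}_i)$
where each $\psi_{i,k}$ is a conjunction of literals.

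Row inconsistency comes essentially for free: since $\psi_{i,k} \vdash \varphi_i$, any subrow $\{\psi_{i,k}(x; \overline{a}_{i,j}) : j \in \Q\}$ inherits the $k_i$-inconsistency of the original row $\{\varphi_i(x; \overline{a}_{i,j}) : j \in \Q\}$, with the same inconsistency bound $k_i$. So once a single index $k_i$ is fixed in each row, row inconsistency is automatic and I only need to worry about preserving path consistency.

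The real content is choosing the indices $k_i$. Fix any path $\eta_0 : \{1, \ldots, n\} \to \Q$ and let $b$ realize the consistent set $\{\varphi_i(x; \overline{a}_{i, \eta_0(i)}) : 1 \leq i \leq n\}$. Because each $\varphi_i(x; \overline{a}_{i, \eta_0(i)})$ is a disjunction holding of $b$, I can pick an index $k_i \in \{1, \ldots, m_i\}$ so that $\psi_{i, k_i}(b; \overline{a}_{i, \eta_0(i)})$ holds, and fix these $k_i$'s. For an arbitrary path $\eta : \{1, \ldots, n\} \to \Q$, mutual indiscernibility lets me move from $\eta_0$ to $\eta$ one row at a time: since $\{\overline{a}_{i,j} : j \in \Q\}$ is indiscernible over the union of the other rows, swapping the column chosen in row $i$ preserves the type of the selected tuple over the rest of the array, and iterating over $i$ yields $\tp\!\bigl((\overline{a}_{i, \eta(i)})_{i=1}^n\bigr) = \tp\!\bigl((\overline{a}_{i, \eta_0(i)})_{i=1}^n\bigr)$. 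Hence some realization of this common type satisfies all of $\psi_{i, k_i}(x; \overline{a}_{i, \eta(i)})$, which gives path consistency for the reduced array. I do not anticipate a genuine obstacle beyond the row-by-row indiscernibility bookkeeping, which is exactly what the mutual-indiscernibility hypothesis is built to provide.
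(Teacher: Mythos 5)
Your proposal is correct and follows essentially the same route as the paper: pass to disjunctive normal form, select one disjunct per row using a realization of one fixed path, note that $k_i$-inconsistency is inherited by any subrow, and invoke mutual indiscernibility to transfer path consistency from the chosen path to an arbitrary one. The only difference is that you spell out the indiscernibility step (equality of types of the column tuples) which the paper leaves implicit.
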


\begin{proof}
Write each $\varphi_i$ as a disjunction of conjunctions of literals, say $$\varphi_i(x; \overline{y}_i) = \bigvee_{\ell=1}^{m_i} \theta_{i,\ell}(x; \overline{y}_j).$$ Then there are $\ell_1, \ldots, \ell_n$ such that $$\theta_{1,\ell_1}(x; \overline{a}_{1,0}) \wedge \ldots \wedge \theta_{n, \ell_n}(x; \overline{a}_{n, 0})$$ is consistent. Replace each formula $\varphi_i$ by $\theta_{i, \ell_i}$. The $k_i$-inconsistency of each row is clearly preserved, and the mutual indiscernibility of the parameters ensures that we also have the consistency condition we require.
\end{proof}

Now we need to consider in more detail the literals which constitute each formula $\varphi_i(x; \overline{a}_{ij})$. 

Note that we may safely assume that each literal in every formula $\varphi_i(x; \overline{a}_{ij})$ actually mentions the variable $x$. Furthermore, we may use the fact that $\mathcal{L}_2$-terms are linear functions of their variables to put every literal in every formula $\varphi_i(x; \overline{a}_{ij})$ into one of the following four types:

\bigskip

\textbf{Type (I):} $kx \equiv_{m, \alpha} t(\overline{a}_{ij})$ for some $k, m \in \N \setminus \{0\}$, $\alpha \in \mathcal{S}_p$ with $p$ a singular prime, and $\mathcal{L}_{oag}$-term $t(\overline{y})$;

\bigskip

\textbf{Type (II):} $\neg( k x \equiv_{m, \alpha} t(\overline{a}_{ij}))$ for some $k, m, \alpha,$ $p$, and $t(\overline{y})$ as above;

\bigskip

\textbf{Type (III):} Literals of the form $kx \diamond t(\overline{a}_{ij})$ where $\diamond \in \{<, >, \leq, \geq, =\}$, or of the type $k x \in G_\alpha + t(\overline{a}_{ij})$, where $t$ is a term and $\alpha \in \mathcal{S}_p$ for some prime $p$;

\bigskip

\textbf{Type (IV):} Literals of the form $kx \neq t(\overline{a}_{ij})$ or $kx \notin G_\alpha + t(\overline{a}_{ij})$, with $k, t,$ and $\alpha$ as above.

\bigskip

It is convenient to allow the parameter $\alpha$ to name the subgroup $\{0\}$ so that literals of Type (I) and (II) encompass simple congruence relations $x \equiv_m t(\overline{a}_{ij})$ and $\neg(x \equiv_m t(\overline{a}_{ij}))$. With this, we see that the division into the four types is exhaustive.

\begin{ass}
\label{minimality}
For each $i \in \{1, \ldots, n\}$, the formula $\varphi_i(x; \overline{a}_{ij})$ is a \emph{minimal} conjunction of literals, in the sense that if we were to remove any one of these literals from the conjunction, then the resulting row of formulas $$\{\varphi'_i(x; \overline{a}_{i,j}) : j \in \Q\}$$ would be consistent.
\end{ass}

\begin{prop}
\label{main_dichotomy}
Under Assumption~\ref{minimality}, each formula $\varphi_i(x, \overline{y})$ is either (a) a \textbf{single} formula $k x \equiv_{m, \alpha} t(\overline{y})$ of Type (I), or else (b) a conjunction of literals of Type (III).

\end{prop}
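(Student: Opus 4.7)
My approach is to use the minimality assumption together with the mutual indiscernibility of the parameters to systematically eliminate the literal types that cannot occur in a minimal row. The general strategy is: suppose a literal $\chi$ of an undesirable type is present in $\varphi_i$, so that $\varphi_i = \psi \wedge \chi$; by Assumption~\ref{minimality}, the row $\{\psi(x; \overline{a}_{ij}) : j \in \Q\}$ is consistent, realized by some element $x^*$; I then use indiscernibility to show that $x^*$ in fact satisfies $\varphi_i(x; \overline{a}_{ij})$ for all but finitely many $j$, contradicting the $k_i$-inconsistency of the row.

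To rule out a Type~(IV) literal $\chi_j$ of the form $kx \neq t(\overline{a}_{ij})$ (or $kx \notin G_\alpha + t(\overline{a}_{ij})$), I would extract a subsequence of the row parameters indiscernible over $x^*$ and then apply a standard zero-one alternative: either $kx^* = t(\overline{a}_{ij})$ (respectively $kx^* \in G_\alpha + t(\overline{a}_{ij})$) holds for at most one $j$, in which case $x^*$ already satisfies enough of the row to contradict $k_i$-inconsistency; or else the term $t(\overline{a}_{ij})$ is essentially constant in $j$ and the literal degenerates to a single $j$-independent constraint, whereupon one produces a modified witness $x^{**}$ by shifting $x^*$ by a suitable element of $G$. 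The parallel argument with $\equiv_{m, \alpha}$ in place of equality handles Type~(II) literals.

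Once Types~(II) and (IV) are excluded, each $\varphi_i$ is a conjunction of Type~(I) and Type~(III) literals. Suppose $\varphi_i$ contains a Type~(I) literal $\chi_j$ of the form $kx \equiv_{m, \alpha} t(\overline{a}_{ij})$. Indiscernibility then forces the residues $t(\overline{a}_{ij})$ modulo $G_\alpha + mG$ to be either pairwise distinct across $j$ --- in which case $\{\chi_j : j \in \Q\}$ is already $2$-inconsistent and minimality forces $\varphi_i = \chi$ with no other conjuncts, giving case~(a) --- or else all equal, in which case $\chi$ is a $j$-independent conjunct whose removal preserves the $k_i$-inconsistency, contradicting minimality. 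Otherwise $\varphi_i$ contains no Type~(I) literal at all and must be a pure conjunction of Type~(III) literals, giving case~(b).

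The main obstacle I expect is the degenerate subcase of the Type~(II)/Type~(IV) analysis in which $t(\overline{a}_{ij})$ is essentially constant along the row. In this situation indiscernibility alone does not immediately yield the desired perturbation $x^{**}$, and one must exploit the concrete algebraic structure of ordered Abelian groups --- convexity of the $G_\alpha$, divisibility by the relevant singular prime $p$, and the coset structure of $G/(G_\alpha + mG)$ --- together with the observation that the intersection $\bigcap_{j \in \Q} \{x : \psi(x; \overline{a}_{ij})\}$ cannot be contained in a single exceptional coset of $k^{-1}(G_\alpha + mG)$ without either violating minimality or conflicting with the mutual indiscernibility of the $i$-th row over the remaining rows.
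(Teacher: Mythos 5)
Your handling of the \emph{variable} literals is sound and close to the paper's: a variable Type~(I) literal makes its row $2$-inconsistent, so minimality forces it to be the whole conjunction; and a witness of the row with a variable Type~(II)/(IV) literal deleted violates $k_i$-inconsistency because the positive parts $kx = t(\overline{a}_{ij})$ (resp.\ $kx \equiv_{m,\alpha} t(\overline{a}_{ij})$) at distinct indices are pairwise exclusive. The genuine gap is exactly where you suspect it, and it is not a minor loose end but the crux of the proposition. First, your claim that a \emph{fixed} Type~(I) conjunct is ``a $j$-independent conjunct whose removal preserves the $k_i$-inconsistency'' is unjustified and is precisely what must be proved: a fixed congruence $kx\equiv_{m,\alpha} c$ can interact with the varying Type~(III) conjuncts so that each is needed for the inconsistency (each instance of the row is consistent, the Type~(III) parts alone are consistent, yet the conjunction is $k_i$-inconsistent), so indiscernibility alone does not let you discard it. Second, the fixed Type~(II)/(IV) case is left entirely open; the perturbation of $x^*$ you gesture at has no clear construction, and ``mutual indiscernibility of the $i$-th row over the remaining rows'' is about the parameters of \emph{other} rows and gives no purchase on a within-row degeneracy.

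The paper resolves this by never removing a fixed literal at all. It splits $\varphi_i = \psi_1\wedge\psi_2\wedge\psi_3$ with $\psi_1$ the fixed literals of Types (I), (II), (IV), $\psi_2$ the variable Type (II)/(IV) literals, and $\psi_3$ the Type (III) literals, and proves a covering claim: since each $\psi_3(G;\overline{a}_{ij})$ is convex and the $\psi_3$-row is consistent (by minimality), either the convex sets are nested or each middle one is contained in the union of two intersections of its neighbors; combined with $k_i$-inconsistency this shows that for each $j$ there is a finite $F$ with
$$\psi_1(x;\overline{a}_{ij})\wedge\psi_3(x;\overline{a}_{ij})\wedge\bigwedge_{j'\in F}\psi_2(x;\overline{a}_{ij'})$$
inconsistent. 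This simultaneously forces $\psi_2$ to be nonempty (otherwise a single instance of $\varphi_i$ would be inconsistent) and localizes the inconsistency in the variable negative literals, so that deleting one literal of $\psi_2$ still leaves an inconsistent row --- contradicting minimality without ever confronting the fixed literals directly. To complete your argument you would need to supply something equivalent to this convexity/covering step; as written, the proposal does not prove the proposition.
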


\begin{proof}
Call a literal $\psi(x; \overline{a}_{ij})$ occurring in $\varphi_i(x; \overline{a}_{ij})$ \emph{fixed} if it defines the same subset of $G$ even as $j$ varies, and call it \emph{variable} otherwise.


As a first observation, if $\varphi_i(x; \overline{a}_{ij})$ contains a single literal of Type (I) which is variable, then by minimality this must be the only conjunct in $\varphi_i(x; \overline{a}_{ij})$, and we are done. So we may assume that any literal in $\varphi_i(x; \overline{a}_{ij})$ of Type (I) is fixed, and our goal will be to show that in fact every literal is of Type (III).

Write $$\varphi_i(x; \overline{a}_{ij}) = \psi_1(x; \overline{a}_{ij}) \wedge \psi_2(x; \overline{a}_{ij}) \wedge \psi_3(x; \overline{a}_{ij}),$$ where:

\bigskip

 $\bullet$ $\psi_1$ is the conjunction of all fixed literals of Type (I), (II) or (IV), 
 
 \bigskip
 
$\bullet$ $\psi_2$ is the conjunction of all variable literals of Type (II) or (IV), and 
 
 \bigskip
 
$\bullet$ $\psi_3$ is the conjunction of all Type (III) literals.

\bigskip

We allow the possibility that there are no literals of one of these types, in which case the corresponding $\psi_i(x; \overline{a}_{ij})$ is equivalent to $x = x$. We will assume, towards a contradiction, that not all literals of $\varphi_i(x; \overline{a}_{ij})$ are contained in $\psi_3(x; \overline{a}_{ij})$. Thus by minimality, $\{\psi_3(x; \overline{a}_{ij}) : j \in \Q\}$ is consistent.

\begin{claim}
\label{incons}
For any $j \in \Q$, there is a finite $F \subseteq \Q \setminus \{j\}$ such that the formula $$\psi_1(x; \overline{a}_{ij}) \wedge \psi_3(x; \overline{a}_{ij})  \wedge \bigwedge_{j' \in F} \psi_2(x; \overline{a}_{ij'}) $$ is inconsistent.
\end{claim}

\begin{proof}
Recall that row $i$ is $k_i$-inconsistent.

\bigskip

\textbf{Case 1:} The convex sets defined by the $\psi_3(x; \overline{a}_{ij})$ are nested: that is, there are distinct $j, j'$ such that $\psi_3(G; \overline{a}_{ij'}) \subseteq \psi_3(G; \overline{a}_{ij})$.

\bigskip

In this case, if $j \in \Q$, we can pick distinct $j(1), \ldots, j(k_i) \in \Q$ such that $$\psi_3(G; \overline{a}_{ij(1)}) \supseteq \ldots \supseteq \psi_3(G; \overline{a}_{ij(k_i)}) \supseteq \psi_3(G; \overline{a}_{i j}),$$ and by $k_i$-inconsistency of the row of the inp-pattern, $$\psi_1(x; \overline{a}_{ij}) \wedge \psi_3(x; \overline{a}_{ij}) \wedge \bigwedge_{\ell=1}^{k_i } \psi_2(x; \overline{a}_{ij(\ell)})$$ is inconsistent. 

\bigskip

\textbf{Case 2:} The convex sets defined by the $\psi_3(x; \overline{a}_{ij})$ are not nested.

\bigskip

In this case, if $j < j' < j''$, then by the consistency of $\{ \psi_3(x; \overline{a}_{i,j} \, : \, j \in \Q\}$ and the convexity of the sets each of these formulas define, $$\psi_3(G; \overline{a}_{ij'}) \subseteq \psi_3(G; \overline{a}_{ij}) \cup \psi_3(G; \overline{a}_{ij''}).$$ Given any $j \in \Q$, choose elements $j(1) < \ldots < j(k_i) < j < j(k_i + 1) < \ldots < j(2 k_i)$. Then $$\psi_3(G; \overline{a}_{ij}) \subseteq \left( \bigcap_{\ell = 1}^{k_i} \psi_3(G; \overline{a}_{i j(\ell)}) \cup \bigcap_{\ell = k_i + 1}^{2 k_i} \psi_3(G; \overline{a}_{i j(\ell)}) \right) ,$$ so as before $$\psi_1(x; \overline{a}_{ij}) \wedge \psi_3(x; \overline{a}_{ij}) \wedge \bigwedge_{\ell = 1}^{2 k_i} \psi_2(x; \overline{a}_{i j(\ell)})$$ is inconsistent.
\end{proof}

Note that Claim~\ref{incons} implies that there must be at least one literal occurring in the conjunction $\psi_2(x; \overline{y})$.

Now suppose that $\neg \theta(x; \overline{y})$ is any literal of Type (II) or (IV) occurring in $\psi_2(x; \overline{y})$, where $\theta(x; \overline{y})$ is an atomic formula. We will establish the following Claim, which will contradict our minimality assumption on $\varphi_i(x; \overline{a}_{ij})$ and finish the proof of Proposition~\ref{main_dichotomy}:

\begin{claim}
 If $\widehat{\psi_2}(x; \overline{y})$ is the smaller conjunction obtained by removing $\neg \theta(x; \overline{y})$ from $\psi_2(x; \overline{y})$, then $$\{\psi_1(x; \overline{a}_{ij}) \wedge \widehat{\psi_2}(x; \overline{a}_{ij}) \wedge \psi_3(x; \overline{a}_{ij}) : j \in \Q\}$$ is inconsistent.
 
\end{claim}

\begin{proof}
Fix any $j \in \Q$. By Claim~\ref{incons}, there is a finite $F \subseteq \Q \setminus \{j\}$ such that $$\psi_1(x; \overline{a}_{ij}) \wedge \psi_3(x; \overline{a}_{ij})  \wedge \bigwedge_{j' \in F} \psi_2(x; \overline{a}_{ij'}) $$ is inconsistent. Now by the fact that the literals in $\psi_2(x; \overline{a}_{ij})$ are variable of Type (II) or (IV), it follows that if $j' \neq j$, then $\theta(x; \overline{a}_{ij})$ implies $\neg \theta(x; \overline{a}_{i j'})$, and hence $$\psi_1(x; \overline{a}_{ij}) \wedge \psi_3(x; \overline{a}_{ij}) \wedge \theta(x; \overline{a}_{ij}) \wedge \bigwedge_{j' \in F} \widehat{\psi_2}(x; \overline{a}_{i j'})$$ is also inconsistent. But this means that the formula $$\varphi_i(x; \overline{a}_{ij}) = \psi_1(x; \overline{a}_{ij}) \wedge \psi_2(x; \overline{a}_{ij}) \wedge \psi_3(x; \overline{a}_{ij})$$ is implied by $$\psi_1(x; \overline{a}_{ij}) \wedge \psi_3(x; \overline{a}_{ij}) \wedge \widehat{\psi_2}(x; \overline{a}_{ij}) \wedge \bigwedge_{j' \in F} \widehat{\psi_2}(x; \overline{a}_{i j'}),$$ and using this repeatedly we see that an inconsistent conjunction of $k_i$ instances of $\varphi_i(x; \overline{a}_{ij})$ yields a finite inconsistent conjunction of formulas of the form $\psi_1(x; \overline{a}_{ij}) \wedge \widehat{\psi_2}(x; \overline{a}_{ij}) \wedge \psi_3(x; \overline{a}_{ij})$, as desired.

\end{proof}

This finishes the proof of Proposition~\ref{main_dichotomy}.
\end{proof}

\begin{lem}
\label{conv_row}
There is at most one $i \in \{1, \ldots, n\}$ such that row $i$ consists of a conjunction of literals of Type (III).
\end{lem}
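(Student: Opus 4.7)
The plan is to argue by contradiction. Suppose rows $i_1 \neq i_2$ both consist entirely of Type (III) literals, and write $C_j := \varphi_{i_1}(G; \overline{a}_{i_1, j})$ and $D_j := \varphi_{i_2}(G; \overline{a}_{i_2, j})$. The first observation is that every Type (III) literal defines a convex subset of $G$: the order-literals $kx \diamond t(\overline{a}_{ij})$ clearly do, and the coset-literals $kx \in G_\alpha + t(\overline{a}_{ij})$ define cosets of the convex subgroup $G_\alpha$, which are again convex. Consequently each $C_j$ and each $D_j$ is a convex subset of $G$. Path-consistency of the full inp-pattern (extend any choice of $\eta$ on $\{i_1, i_2\}$ arbitrarily over the remaining rows) yields $C_{j_1} \cap D_{j_2} \neq \emptyset$ for every $j_1, j_2 \in \Q$; in particular all $C_j$ and $D_j$ are nonempty.

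The key external tool is Helly's theorem in dimension one: a finite family of convex subsets of a linearly ordered set has nonempty total intersection if and only if each pair of its members intersects. By $k_{i_1}$-inconsistency some $k_{i_1}$-subset of $\{C_j\}$ has empty intersection, so by Helly some pair $C_j, C_{j'}$ is disjoint; mutual indiscernibility of the sequence then forces every two distinct $C_j$'s to be disjoint, and the analogous statement holds for the $D_j$'s. Since two disjoint convex subsets of a linear order are totally ordered by position, indiscernibility allows us to assume (reversing the indexing of $D$ if necessary) that whenever $j < j'$, every element of $D_j$ is less than every element of $D_{j'}$.

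To derive the contradiction, fix any $j_1$ and use path-consistency to pick $p \in C_{j_1} \cap D_0$ and $q \in C_{j_1} \cap D_2$. Since $D_0$ lies entirely below $D_2$, we have $p < q$; by convexity of $C_{j_1}$, $[p, q] \subseteq C_{j_1}$. Because $D_1$ sits positionally between $D_0$ and $D_2$, we get $D_1 \subseteq [p, q] \subseteq C_{j_1}$. The identical argument applied to any $j_2 \neq j_1$ yields $D_1 \subseteq C_{j_2}$, so $D_1 \subseteq C_{j_1} \cap C_{j_2}$, and the nonemptiness of $D_1$ contradicts the disjointness of $C_{j_1}$ and $C_{j_2}$ established above. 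The main obstacle I anticipate is bookkeeping: verifying cleanly that every Type (III) literal (especially the coset literals $kx \in G_\alpha + t$) defines a convex set, and tracking how indiscernibility upgrades ``some disjoint pair'' into ``every distinct pair disjoint,'' then into a coherent positional ordering of the $D_j$'s. Once these are in place, the Helly step and the short convexity argument force the desired contradiction.
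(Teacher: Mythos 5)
Your proof is correct and follows the same route the paper intends: the paper's proof of this lemma is a one-line appeal to "an elementary argument" using convexity of Type (III) sets (citing Theorem 4.1 of \cite{DGL}), and what you have written is precisely that argument, carried out in full (convexity of each literal, one-dimensional Helly plus indiscernibility to get pairwise disjointness, and the sandwiching contradiction). The only implicit point worth keeping in mind is that treating consistency as actual nonempty intersection in $G$ uses the standing assumption that one works in a sufficiently saturated model, which the paper also assumes tacitly.
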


\begin{proof}
Since literals of Type (III) define convex sets, this follows by an elementary argument; see, for instance, the proof of Theorem 4.1 of \cite{DGL}.
\end{proof}

Next, we further simplify the literals of Type (I) which appear as rows in our inp-pattern.

\begin{lem}
\label{simplify_I}
Without loss of generality, each formula $\varphi_i(x; \overline{y})$ of Type~(I) which appears in the inp-pattern is of the form $$x \equiv_{p^\ell, \alpha} t(\overline{y})$$ for some singular prime $p$, $\ell \in \N$, and some $\mathcal{L}_{oag}$-term $t(\overline{y})$.
\end{lem}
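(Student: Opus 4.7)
The plan is to simplify a Type~(I) literal $kx \equiv_{m,\alpha} t(\overline{y})$ (with $\alpha \in \mathcal{S}_p$ for a singular prime $p$) to the canonical form $x \equiv_{p'^{\ell'}, \alpha'} s(\overline{y})$. The pivotal observation is that when the literal is consistent its solution set is a coset of $K := \{z \in G : kz \in G_\alpha + mG\}$, and a direct calculation yields $K = G_\alpha + (m/d)G$ with $d := \gcd(k,m)$. The nontrivial inclusion ``$\subseteq$'' proceeds by writing $k = d k_1$, $m = d m_1$ with $\gcd(k_1,m_1)=1$, observing that if $kz \in G_\alpha + mG$ then $d(k_1 z - m_1 h) \in G_\alpha$ for some $h \in G$, invoking convexity of $G_\alpha$ to conclude $k_1 z - m_1 h \in G_\alpha$ (since $|y| \leq |dy|$ forces $y \in G_\alpha$ whenever $dy \in G_\alpha$), and then inverting $k_1$ modulo $m_1$ via B\'ezout to solve for $z$. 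Thus the original literal is equivalent to $x \equiv_{m/d,\alpha} s(\overline{y})$ for a new term $s$, chosen uniformly by saturation and indiscernibility.

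Next, factor $m/d = \prod_q q^{e_q}$ into prime powers. A standard CRT identity $G_\alpha + (m/d) G = \bigcap_q (G_\alpha + q^{e_q} G)$ (provable by the same B\'ezout trick for coprime pairs) decomposes the literal into the conjunction $\bigwedge_q x \equiv_{q^{e_q},\alpha} s(\overline{a}_{ij})$. Using mutual indiscernibility and a pigeonhole argument, the $k_i$-inconsistency of the original row is localized to a single prime $q_0$; this $q_0$ must be singular, for otherwise $G/q_0^{e_{q_0}} G$ would be finite and could support at most boundedly many distinct cosets, contradicting a $\Q$-indexed family of distinct instances. The other conjuncts either are constant in $j$ (absorbable into the fixed background of the pattern) or contribute further $2$-inconsistency (only strengthening the row). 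Finally, reindex $G_\alpha$ as some $\beta \in \mathcal{S}_{q_0}$ with $G_\beta + q_0^{e_{q_0}} G = G_\alpha + q_0^{e_{q_0}} G$, using that $\mathcal{S}_{q_0}$ is finite and the convex subgroups of $G$ are linearly ordered so a best match exists.

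The main obstacle will be the final reindexing: exhibiting a $\beta \in \mathcal{S}_{q_0}$ so that $G_\beta + q_0^{e_{q_0}} G$ equals (rather than merely contains or is contained in) $G_\alpha + q_0^{e_{q_0}} G$. This requires carefully leveraging the linear order on convex subgroups together with the finiteness of $\mathcal{S}_{q_0}$ and the fact that only the image modulo $q_0^{e_{q_0}} G$ matters. A secondary care point is producing the new term $s(\overline{y})$ as a genuine $\mathcal{L}_{oag}$-term rather than an arbitrary coset representative; this should follow by replacing $t(\overline{a}_{ij})$ with a fixed realizer translated by an indiscernible displacement. Throughout, the convexity identity ``$ny \in G_\alpha \Rightarrow y \in G_\alpha$ for $n \geq 1$'' is the key technical input, and is precisely what distinguishes the ordered abelian setting (in which these reductions work) from the merely abelian one.
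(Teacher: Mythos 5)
Your argument is essentially correct, and its core reduction is a genuinely different (and arguably cleaner) route than the paper's. Where you compute the subgroup $K=\{z : kz\in G_\alpha+mG\}=G_\alpha+(m/d)G$ with $d=\gcd(k,m)$ in one shot and replace the literal by a congruence modulo that subgroup, the paper instead performs a two-stage reduction: first it strips the factors of $p$ out of $k$ one at a time (using consistency to introduce new parameters $a'_{ij}$ with $t(\overline{a}_{ij})\in p\,a'_{ij}+G_\alpha$), and then inverts the remaining coprime coefficient via B\'ezout, keeping $s\cdot t(\overline{y})$ as an honest term. Both arguments hinge on the identical convexity input ($dy\in G_\alpha\Rightarrow y\in G_\alpha$), and both must extend the parameter tuples to realize coset representatives, which is exactly the device you invoke; the CRT decomposition and the pigeonhole localization of the inconsistency to a singular prime are also the same in both proofs. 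Two caveats. First, after localizing to the prime $q_0$ you should simply \emph{discard} the remaining conjuncts rather than "absorb" or retain them: replacing a row by a weaker formula preserves path consistency, and the chosen conjunct alone witnesses the row's inconsistency; retaining the others would leave you short of the single-literal form the lemma asserts. Second, the step you single out as the main obstacle --- reindexing $\alpha$ by some $\beta\in\mathcal{S}_{q_0}$ with $G_\beta+q_0^{e}G=G_\alpha+q_0^{e}G$ --- is not needed at all: the lemma's conclusion does not require $\alpha$ to lie in $\mathcal{S}_p$ for the modulus prime $p$, and the later counting argument (Lemma~\ref{Sp}) only produces separating elements of $\mathcal{S}_p$ from the subgroups $H_{p^\ell}(c-d)$, never from the $\alpha$'s themselves. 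Dropping that step removes the only genuinely doubtful point in your outline.
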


\begin{proof}
Suppose the formulas in the $i$-th row are $k x \equiv_{m, \alpha} t(\overline{a}_{ij})$.

\begin{claim}
\label{prime_powers}
Without loss of generality, $m = p^\ell$ for some singular prime $p$.
\end{claim}

\begin{proof}
Note that if $m = m_1 m_2$ with $m_1, m_2$ relatively prime, then $$kx  \equiv_{m, \alpha} t(\overline{a}_{ij}) \Leftrightarrow \left( kx  \equiv_{m_1, \alpha} t(\overline{a}_{ij}) \wedge kx  \equiv_{m_2, \alpha} t(\overline{a}_{ij}) \right).$$ (This statement is simply a version of the Chinese remainder theorem; see Lemma~2.7 of \cite{CluckersHalup}.) Thus $kx \equiv_{m,\alpha} t(\overline{a}_{ij})$ is equivalent to a conjunction of congruences of the form $k x \equiv_{p^\ell, \alpha}$ for prime powers $p^{\ell}$. For the $i$th row to be inconsistent, there must be some such prime power $p^\ell$ such that $\{ kx \equiv_{p^\ell, \alpha} t(\overline{a}_{ij}) \, : \, j \in \Q\}$ is inconsistent; then it is clear that $p$ must be a singular prime, and that we may replace the $i$th row with these formulas.
\end{proof}

\begin{claim}
\label{prime_k}
Without loss of generality, $p$ does not divide $k$.
\end{claim}

\begin{proof}
Suppose that $p | k$. Since each formula in the inp-pattern is consistent, $t(\overline{a}_{ij}) \in p G + G_\alpha$ for every $j \in \Q$. Also, we may extend the tuples $\overline{a}_{ij}$ in the indiscernible sequence if necessary so that they include elements $a'_{ij} \in \overline{a}_{ij}$ such that $t(\overline{a}_{ij}) \in p a'_{ij} + G_\alpha$.

Then we assert that for any $x \in G$, $$k x \equiv_{p^{\ell}, \alpha} t(\overline{a}_{ij}) \Leftrightarrow (k/p) x \equiv_{p^{\ell -1}, \alpha} a'_{ij},$$ and Claim~\ref{prime_k} follows by applying this repeatedly until no factors of $p$ in $k$ remain. To see why the assertion is true, suppose on the one hand that $k x \equiv_{p^{\ell}, \alpha} t(\overline{a}_{ij})$; then $k x \equiv_{p^{\ell}, \alpha} p a'_{ij}$, and so $$p \left( (k/p) x - a'_{ij} \right) \in G_\alpha + p^\ell G.$$ So we can write $$p \left( (k/p) x - a'_{ij} \right) = g + p^\ell h$$ with $g \in G_\alpha$ and $h \in G$. Then $g$ is $p$-divisible, and furthermore $g = p g_0 $ for some $g_0 \in G_\alpha$ (by convexity of $G_\alpha$); thus $$p \left( (k/p) x - a'_{ij} \right) = p (g_0 + p^{\ell-1} h)$$ $$\Rightarrow (k/p) x - a'_{ij} = g_0 + p^{\ell-1} h,$$ and so $(k/p) x \equiv_{p^{\ell - 1}, \alpha} a'_{ij}$ as desired. Conversely, if $$(k/p) x - a'_{ij} = g + p^{\ell -1} h$$ for $g \in G_\alpha$ and $h \in G$, then multiplying by $p$ gives $$k x - t(\overline{a}_{ij}) + G_\alpha = p^\ell h + G_\alpha,$$ and so $kx \equiv_{p^\ell , \alpha} t(\overline{a}_{ij})$.
\end{proof}

Finally, we have reduced to the case of a formula $k x \equiv_{p^\ell, \alpha} t(\overline{a}_{ij})$ where $\gcd(p^\ell, k) = 1$. Pick integers $r, s$ such that $r p^\ell + s k = 1$. We claim that for any $x \in G$, $$k x \equiv_{p^\ell, \alpha} t(\overline{a}_{ij}) \Leftrightarrow x \equiv_{p^\ell , \alpha} s \cdot t(\overline{a}_{ij}),$$ completing the proof of Lemma~\ref{simplify_I}. On the one hand, if $k x \equiv_{p^\ell, \alpha} t(\overline{a}_{ij})$ and $$kx - t(\overline{a}_{ij}) = g + p^\ell h$$ with $g \in G_\alpha$ and $h \in G$, then $$skx - s \cdot t(\overline{a}_{ij}) = sg + p^\ell sh$$ $$\Rightarrow (1 - r p^\ell) x - s \cdot t(\overline{a}_{ij}) = sg + p^\ell s h$$ $$\Rightarrow x - s \cdot t(\overline{a}_{ij}) = sg + p^\ell ( sh + rx),$$ so $x \equiv_{p^\ell, \alpha} s \cdot t(\overline{a}_{ij})$. On the other hand, if $x \equiv_{p^\ell, \alpha} s \cdot t(\overline{a}_{ij})$ and $$x - s \cdot t(\overline{a}_{ij}) = g + p^\ell h$$ with $g \in G_\alpha$ and $h \in G$, then $$kx - ks \cdot t(\overline{a}_{ij}) = kg + p^\ell kh$$ $$\Rightarrow kx - (1 - r p^\ell) t(\overline{a}_{ij}) = kg + p^\ell kh$$ $$\Rightarrow kx - t(\overline{a}_{ij}) = kg + p^\ell (kh - r t(\overline{a}_{ij})),$$ so $kx \equiv_{p^\ell, \alpha} t(\overline{a}_{ij})$.

\end{proof}

\begin{lem}
\label{Sp}
Suppose that two different rows of the inp-pattern, Row $i$ and Row $i'$, consist of Type~(I) formulas $$x \equiv_{p^\ell, \alpha} t(\overline{a}_{ij})$$ and $$x \equiv_{p^{\ell'}, \alpha'} t'(\overline{a}_{i'j})$$ respectively, with the same singular prime $p$.

Then if $G_\alpha \subseteq G_{\alpha'}$, there is some $\beta \in \mathcal{S}_p$ such that $\alpha \leq \beta < \alpha'$.
\end{lem}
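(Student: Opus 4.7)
The plan is to show that under the hypothesis $G_\alpha \subseteq G_{\alpha'}$ we actually have strict containment $G_\alpha \subsetneq G_{\alpha'}$, from which the conclusion follows by taking $\beta = \alpha$ itself (giving $\alpha \leq \alpha < \alpha'$). So it suffices to derive a contradiction from the assumption that $\alpha = \alpha'$.

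Suppose for contradiction $\alpha = \alpha'$, and without loss of generality assume $\ell \leq \ell'$. Set $H := G_\alpha + p^\ell G$ and $H' := G_\alpha + p^{\ell'} G$, so that $H' \subseteq H$. Each formula $x \equiv_{p^\ell, \alpha} t(\overline{a}_{ij})$ in row $i$ defines a single coset of $H$, namely $t(\overline{a}_{ij}) + H$, and similarly each row-$i'$ formula defines a coset of $H'$. The first key step is to observe that the cosets $t(\overline{a}_{ij}) + H$ must be pairwise distinct as $j$ ranges over $\Q$: indeed, if two of them coincided then by mutual indiscernibility all of them would coincide, so every finite conjunction of row-$i$ formulas would be consistent, contradicting the $k_i$-inconsistency of row $i$.

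The second step is to exploit the joint consistency of rows $i$ and $i'$. Given any $j, j' \in \Q$, there exists $x \in G$ satisfying both $x \equiv_{p^\ell, \alpha} t(\overline{a}_{ij})$ and $x \equiv_{p^{\ell'}, \alpha} t'(\overline{a}_{i'j'})$; then $t(\overline{a}_{ij}) - x \in H$ and $t'(\overline{a}_{i'j'}) - x \in H' \subseteq H$, so subtracting gives
\[
t(\overline{a}_{ij}) - t'(\overline{a}_{i'j'}) \in H.
\]
Fixing any single $j'$ and letting $j$ vary over $\Q$, every $t(\overline{a}_{ij})$ lies in the single $H$-coset of $t'(\overline{a}_{i'j'})$, so all the cosets $t(\overline{a}_{ij}) + H$ coincide, contradicting the distinctness established in the first step.

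There is no real obstacle here; the only thing to notice is that since $\ell \leq \ell'$ we have the containment $H' \subseteq H$, which allows the finer congruence in row $i'$ to be replaced by the coarser one, collapsing the cross-row consistency condition to a pigeonhole argument in the single quotient $G/H$.
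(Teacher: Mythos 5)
Your argument correctly rules out the degenerate case $G_\alpha + p^\ell G \supseteq G_{\alpha'} + p^{\ell'} G$: the coset/pigeonhole step (distinctness of the cosets $t(\overline{a}_{ij}) + H$ from $k_i$-inconsistency plus indiscernibility, then collapsing them via cross-row consistency) is sound and in fact mirrors the opening step of the paper's proof, which shows $\ell < \ell'$ for the same reason. But the proof has a genuine gap at the final step: you take $\beta = \alpha$, and nothing guarantees that $\alpha \in \mathcal{S}_p$. In a Type (I) literal $x \equiv_{p^\ell, \alpha} t(\overline{y})$ the prime $p$ of the modulus and the sort of $\alpha$ are unrelated: $\alpha$ may name the trivial subgroup $\{0\}$ (the paper explicitly allows this so that plain congruences $x \equiv_m t$ are of Type (I)), or may lie in $\mathcal{S}_q$ for a different prime $q$. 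The conclusion of the lemma specifically demands $\beta \in \mathcal{S}_p$, and this is not a cosmetic requirement --- it is exactly what lets the proof of Theorem~\ref{dp_bound} charge each row with modulus a power of $p$ to a distinct element of $\mathcal{S}_p$ and thereby bound the number of such rows by $|\mathcal{S}_p|$. Taking $\beta = \alpha$ produces no element of $\mathcal{S}_p$ at all in general, so the counting argument downstream would break.

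The missing idea is to manufacture an element of $\mathcal{S}_p$ from the pattern itself. The paper does this by realizing two different cells of row $i$ against the same cell of row $i'$: choosing $c$ satisfying $x \equiv_{p^\ell,\alpha} t(\overline{a}_{i,0}) \wedge x \equiv_{p^{\ell'},\alpha'} t'(\overline{a}_{i',0})$ and $d$ satisfying $x \equiv_{p^\ell,\alpha} t(\overline{a}_{i,1}) \wedge x \equiv_{p^{\ell'},\alpha'} t'(\overline{a}_{i',0})$, one gets $c - d \in (G_{\alpha'} + p^\ell G) \setminus (G_\alpha + p^\ell G)$ (using $\ell < \ell'$). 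By the defining property of $H_{p^\ell}$ this forces $G_\alpha \subseteq H_{p^\ell}(c-d) \subsetneq G_{\alpha'}$, and $H_{p^\ell}(c-d) = H_p(p^{\ell-1}(c-d))$ is by definition a subgroup named by an element $\beta$ of $\mathcal{S}_p$. Your coset computation could be adapted to produce the element $c - d$, but without invoking the definition of the sorts $\mathcal{S}_p$ via the subgroups $H_p(\cdot)$ you cannot reach the stated conclusion.
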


\begin{proof}
First note that $\ell < \ell'$, since if $\ell' \leq \ell$, we would have $$(G_\alpha + p^\ell G) \subseteq (G_{\alpha'} + p^{\ell'} G)$$ and it wold be impossible to form two rows of an inp-pattern with the relations $\equiv_{p^\ell, \alpha}$ and $\equiv_{p^{\ell'}, \alpha}$.

Now pick $c \in G$ such that $$c \equiv_{p^\ell, \alpha} t(\overline{a}_{i,0}) \wedge c \equiv_{p^{\ell'}, \alpha'} t'(\overline{a}_{i',0}),$$ and pick $d \in G$ such that $$d \equiv_{p^\ell, \alpha} t(\overline{a}_{i,1}) \wedge d \equiv_{p^{\ell'}, \alpha'} t'(\overline{a}_{i',0}).$$ Then $$c - d \in (G_{\alpha'} + p^{\ell'} G) \setminus (G_\alpha + p^\ell G) \subseteq (G_{\alpha'} + p^{\ell} G) \setminus (G_\alpha + p^\ell G),$$ and thus $$G_\alpha \subseteq H_{p^\ell}(c-d) \subsetneq G_{\alpha'}.$$ But $H_{p^\ell} (c-d) = H_p(p^{\ell-1} (c-d))$ is a subgroup named by a sort in $\mathcal{S}_p$, so we are finished.
\end{proof}

\textit{Proof of Theorem~\ref{dp_bound}:} Suppose that the set $\mathbb{P}_{sing}$ of singular primes is finite and that $\mathcal{S}_p$ is finite for each $p \in \mathbb{P}_{sing}$, and that we have an inp-pattern of depth $n$ in a single variable $x$ satisfying all of the assumptions above.

Then at most one row consists of a conjunction of Type~(III) formulas, and all other rows consist of single Type~(I) formulas of the form $x \equiv_{p^\ell, \alpha} t(\overline{a}_{ij})$ for some $p \in \mathbb{P}_{sing}$. By Lemma~\ref{Sp}, for each singular prime $p$, there are at most $|\mathcal{S}_p| $ rows in our inp-pattern. Therefore the total depth of the inp-pattern is at most $$1 + \sum_{p \in \mathcal{P}_{sing}}  |\mathcal{S}_p| ,$$ and in particular the dp-rank of $G$ is finite. $\square$








\section{Examples}

\subsection{Optimality of the upper bound in Theorem~\ref{dp_bound}}

For any prime $p$, let $$\Q_{(p)} = \{ \frac{a}{b} \, : \, a, b \in \Z, \, b \neq 0, \, \textup{gcd}(b, p) = 1\}.$$ Fix some countably infinite subset $B \subseteq \R$ such that $1 \in B$ and the elements of $B$ are linearly independent over $\Q_{(p)}$, and let $G_p$ be the ordered subgroup of $\R$ consisting of all finite sums $a_1 b_1 + \ldots + a_k b_k$ such that $a_i \in \Q_{(p)}$ and $b_i \in B$. The essential properties of $G_p$ are that it is an Archimedean ordered abelian group, $[G_p : p G_p] = \infty$, and for any prime $q \neq p$, $q G_p = G_p$.

For $j \in \omega \setminus \{0\}$, we let $G_p^j$ stand for the direct sum of $j$ copies of $G_p$, ordered lexicographically. One can readily check that $G_p^j$ has $p$ as its unique singular prime.  One may also check that and the dp-rank of $G_p^j$ (in $\mathcal{L}_{oag}$) is $j$, although this is not necessary for the ensuing proposition and in fact follows from its proof.

\begin{prop}
For any finite sequence of elements $k_0, \ldots, k_{m-1} \in \omega \setminus \{0\}$, let $p_0, \ldots, p_{m-1}$ denote the first $m$ prime numbers and let $$G = \Q \oplus \bigoplus_{i=0}^{m-1} G_{p_i}^{k_i},$$ ordered lexicographically (so that the first coordinate in $\Q$ takes precedence).  Then the singular primes for $G$ are $p_0, \dots, p_{m-1}$, for each such $p_i$ the cardinality of $\mc{S}_{p_i}$ is $k_i$, and the dp-rank of $G$  is $1 + \sum_{i=0}^{m-1} k_i$.
\end{prop}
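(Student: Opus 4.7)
My plan is to compute the convex-subgroup structure of $G$, from which the list of singular primes and the cardinalities $|\mathcal{S}_{p_i}|$ can be read off; Theorem~\ref{dp_bound} will then give the upper bound on the dp-rank of $G$, and a matching lower bound will be produced by exhibiting an explicit inp-pattern.

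Since each of $\mathbb{Q}$ and $G_{p_i}$ is Archimedean, the convex subgroups of the lexicographic sum $G$ form a finite chain $G = C_0 \supsetneq C_1 \supsetneq \cdots \supsetneq C_N = \{0\}$ with $N = 1 + \sum_i k_i$, where $C_\ell$ is obtained by zeroing out the first $\ell$ coordinates in the ordered listing $\mathbb{Q}, G_{p_0}, \ldots, G_{p_0}, G_{p_1}, \ldots, G_{p_{m-1}}$. Writing $s_i := 1 + k_0 + \cdots + k_{i-1}$, the identities $p \cdot \mathbb{Q} = \mathbb{Q}$ and $p \cdot G_{p_l} = G_{p_l}$ (for any prime $p \neq p_l$) imply $G/pG = 0$ unless $p = p_i$ for some $i$, in which case $G/p_i G \cong (G_{p_i}/p_i G_{p_i})^{k_i}$ is infinite thanks to the $\mathbb{Q}_{(p_i)}$-linear independence of $B$. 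Hence $\mathbb{P}_{sing} = \{p_0, \ldots, p_{m-1}\}$. To compute $\mathcal{S}_{p_i}$, for $a \in G \setminus p_i G$ let $j(a)$ be the least index in $\{1, \ldots, k_i\}$ for which the $(s_i + j(a))$-th coordinate of $a$ lies outside $p_i G_{p_i}$; a routine calculation, using convexity of $C_\ell$ and the $p_i$-divisibility of the other blocks, shows $H_{p_i}(a) = C_{s_i + j(a)}$. This produces $k_i$ distinct nontrivial values, so $|\mathcal{S}_{p_i}| = k_i$, and Theorem~\ref{dp_bound} yields the upper bound $1 + \sum_i k_i$ on the dp-rank of $G$.

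For a matching lower bound, I would exhibit an inp-pattern in a single variable of depth $1 + \sum_i k_i$. One row uses Type~(III) formulas in the $\mathbb{Q}$-coordinate---for example, $\{2r < x \wedge x < 2r + 1 : r \in \omega\}$, which is $2$-inconsistent. For each $(i, j)$ with $1 \le j \le k_i$, I would use the Type~(I) congruence row $\{x \equiv_{p_i^j, \gamma_j} c^{(i,j)}_r : r \in \omega\}$, where $\gamma_j \in \mathcal{S}_{p_i}$ names the subgroup $C_{s_i + k_i + 1 - j}$, and $c^{(i,j)}_r$ is the element of $G$ supported only at position $s_i + k_i + 1 - j$, with value $p_i^{j-1} b_{i,j,r}$ for pairwise distinct basis elements $b_{i,j,r} \in B$. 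The $2$-inconsistency of row $(i,j)$ is immediate: the difference $p_i^{j-1}(b_{i,j,r} - b_{i,j,r'})$ is not in $p_i^j G_{p_i}$, by the linear independence of $B$ over $\mathbb{Q}_{(p_i)}$. For mutual consistency, the witness $x_\eta := \sum_{(i,j)} c^{(i,j)}_{\eta(i,j)}$, plus a suitable $\mathbb{Q}$-adjustment to satisfy the Type~(III) row, works: one verifies coordinate by coordinate against the explicit description of $G_{\gamma_j} + p_i^j G$ that the required congruence holds for each $(i,j)$.

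The main subtlety is in choosing the powers and support positions in the $(i,j)$-rows correctly: Lemma~\ref{Sp} forces two rows of Type~(I) with the same prime $p_i$ to use strictly different powers of $p_i$ in a specific direction relative to the inclusion chain of the $G_{\gamma_j}$'s, and the parameters must also be arranged so that distinct rows do not destructively interfere. The ``triangular'' arrangement above is designed precisely for this: for $j' > j$ the extra factor $p_i^{j'-1}$ in $c^{(i,j')}_\bullet$ already lives in $p_i^j G_{p_i}$, while for $j' < j$ the support position $s_i + k_i + 1 - j'$ lies in the free (upper) portion of $G_{\gamma_j}$, so in either case the interfering summand is absorbed by $G_{\gamma_j} + p_i^j G$.
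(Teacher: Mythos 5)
Your proposal is correct and follows essentially the same route as the paper: compute the (finite chain of) convex subgroups to identify the singular primes and show $|\mathcal{S}_{p_i}| = k_i$, get the upper bound from Theorem~\ref{dp_bound}, and realize the lower bound with an inp-pattern consisting of one row of disjoint intervals in the $\Q$-coordinate plus, for each copy of $G_{p_i}$, a row of congruences $x \equiv_{p_i^{j}, \gamma} c$ with parameters supported in that single copy --- which is exactly the paper's pattern, with your triangular absorption argument spelling out the consistency check the paper leaves as an exercise. The only cosmetic difference is that for the bottom row of each prime you name the subgroup $H_i = \bigoplus_{l>i} G_{p_l}^{k_l}$ where the paper names $\{0\}$; since $H_i \subseteq p_i G$ these define the same relation.
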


\begin{proof}
The fact that $p_0, \dots, p_{m-1}$ are the singular primes for $G$ is immediate.   On the one hand, for any $i < m$, if we let $$H_i = \bigoplus_{j > i} G_{p_j}^{k_j},$$ then the sort $\mathcal{S}_{p_i}$ consists of names for the convex subgroups $$0, G_{p_i} \oplus H_i, G^2_{p_i} \oplus H_i, \ldots, G^{k_i-1}_{p_i} \oplus H_i$$ (as can be checked by the definition of the groups $H_{p_i}(a)$ as a simple exercise); thus $|S_{p_i}| = k_i$, and since $p_0, \ldots, p_{m-1}$ are all the singular primes of $G$, the fact that the dp-rank of $G$ is less than or equal to $1 + \sum_{i=0}^{m-1} k_i$ follows from Theorem~\ref{dp_bound}.
 
On the other hand, to get the opposite rank inequality, we just need to exhibit an inp-pattern of depth $1 + \sum_{i=0}^{m-1} k_i$ in $G$. To accomplish this, for any $i \in \{ 0, \ldots, m-1\}$ and any $j \in \{0, \ldots, k_i - 1\}$, pick elements $\{c_{i,j,k} \, : \, k \in \omega\} \subseteq p_i^j G_{p_i}$ which represent distinct cosets of $p_i^{j+1} G_{p_i}$ and let $e_{i,j,k} \in G$ be the element whose coordinate in the $j$th copy of $G_{p_i}$ (counting from the right) is $c_{i,j,k}$ and all of whose other coordinates are equal to $0$.

Finally, we can construct an inp-pattern as follows: for each $i \in \{ 0, \ldots, m-1\}$ and each $j \in \{0, \ldots, k_i - 1\}$, construct a row of formulas $$\varphi_{i,j}(x; e_{i,j,k}) := x \equiv_{p_i^{j+1} \alpha_{i,j}} e_{i,j,k}$$ where $\alpha_{i,j}$ is an element in the sort $\mathcal{S}_{p_i}$ representing the convex subgroup $G^j_{p_i} \oplus H_i$, unless $j=0$ in which case we let $\alpha_{i,j}$ be a name for the trivial subgroup $\{0\}$. The final row in the inp-pattern will consist of pairwise disjoint intervals $a_k < x < b_k$ (for $k \in \omega$) constructed by first picking elements $a^0_0 < b^0_0 < a^0_1 < b^0_1 < \ldots$ in $\Q$ and then letting $a_k, b_k$ be elements whose $\Q$-coordinate is $a^0_k$ (or $b^0_k$, respectively) and all of whose other coordinates are equal to zero.

It is immediate that each row of the pattern described above is $2$-inconsistent, and all that is left is to explain why, given any $k \in \omega$ and any choice of values $k(i,j) \in \omega$ for each $(i,j)$ with $i \in \{0, \ldots, m-1\}$ and $j \in \{0, \ldots, k_i - 1\}$, there is an element $d \in (a_k, b_k)$ which satisfies $$d \equiv_{p_i^{j+1}, \alpha_{i,j}} e_{i, j, k(i,j)}$$ for all permissible pairs $(i,j)$. For this, we may pick $c_k \in \Q$ such that $a_k < c_k < b_k$, let $d_k \in G$ be such that its $\Q$-coordinate is $c_k$ and all of its other coordinates are $0$, and let $$d = d_k + \sum_{i < m, j < k_i} e_{i,j, k(i,j)}.$$ We leave it as an exercise to the reader to verify that this element $d$ satisfies all the required formulas.

\end{proof}

\bibliography{modelth}

\end{document}